\theoremstyle{plain}
\newtheorem{theorem}{Theorem}
\newtheorem{corollary}[theorem]{Corollary}
\newtheorem{main}{Main theorem}
\newtheorem{lemma}[theorem]{Lemma}
\theoremstyle{remark}
\theoremstyle{definition}
\begin{document}
\date{} 
\title [Weak Dirac conjecture]{A new progress on Weak Dirac conjecture}

\author{Hoang Ha Pham and Tien Cuong Phi}
\address{Department of Mathematics, Hanoi National University of Education, 136 XuanThuy str., Hanoi, Vietnam}
\email{ha.ph@hnue.edu.vn; cuong.tienphi@gmail.com }
\subjclass[2010]{52C10, 52C30}
\keywords{Arrangement of points, Incident-line-number, Dirac conjecture, lines with few point}
\begin{abstract}
	In 2014, Payne-Wood proved that every non-collinear set $P$ of $n$ points in the Euclidean plane contains a point in at least $\dfrac{n}{37}$ lines determined by $P.$ This is a remarkable answer for the conjecture, which was proposed by Erd\H{o}s, that every non-collinear set $P$ of $n$ points contains a point in at least $\dfrac{n}{c_1}$ lines determined by $P$, for some constant $c_1.$ In this article, we refine the result of Payne-Wood to give that every non-collinear set $P$ of $n$ points contains a point in at least $\dfrac{n}{26}+2$ lines determined by $P$ . Moreover, we also discuss some relations on theorem Beck that every set $P$ of $n$ points with at most $l$ collinear determines at least $\dfrac{1}{61}n(n-l)$ lines.
\end{abstract}
\maketitle
\section{Introduction}
Let $P$ be a set of points in the Euclidean plane. A line that contains at least two points in $P$ is said to be determined by $P.$

In 1951, G. Dirac (\cite{D}) made the following conjecture, which
remains unsolved:\\
{\bf Conjecture 1 (Strong Dirac Conjecture).} Every non-collinear set $P$ of $n$ points in the plane contains a point in at least $\dfrac{n}{2}-c_0$ of the lines determined by $P$, for some constant $c_0$.

In 2011, J. Akiyama, H. Ito, M. Kobayashi, and G. Nakamura (\cite{AIKN}) gave somes examples to show that the $\dfrac{n}{2}$ bound would be tight. We note that if $P$ is non-collinear and contains $\dfrac{n}{2}$ or more collinear points, then Dirac's Conjecture holds.Thus we may assume that $P$ does not contain $\dfrac{n}{2}$ collinear points, and $n \geq 5.$

In 1961, P. Erd\H{o}s (\cite{E}) proposed the following weakened conjecture.\\
{\bf Conjecture 2 (Weak Dirac Conjecture).} Every non-collinear set $P$ of $n$ points contains
a point in at least $\dfrac{n}{c_1}$ lines determined by $P,$ for some constant $c_1.$\\

In 1983, Beck (\cite{B}) and Szemer\'{e}di-Trotter (\cite{ST}) proved the Weak Dirac Conjecture for the case $c_1$ but it is unspecied or very large. In 2014, Payne-Wood (\cite{PW}) proved the following theorem:
\begin{theorem} Every non-collinear set $P$ of $n$ points contains a point in at least $\dfrac{n}{37}$ lines determined by $P$.
	\end{theorem}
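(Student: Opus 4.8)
The plan is to run the Beck / Szemer\'edi--Trotter argument but with every constant made explicit. Write $t$ for the maximum number of collinear points of $P$, let $L$ be the set of determined lines, and for $v\in P$ write $\deg(v)$ for the number of lines of $L$ through $v$; the goal is to produce a point $v$ with $\deg(v)\ge n/37$.

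First I would dispose of the dense case. Since $P$ is not collinear, fix a line $\ell_0$ with $|\ell_0\cap P|=t$ and a point $v_0\in P\setminus\ell_0$. The $t$ lines joining $v_0$ to the points of $\ell_0\cap P$ are pairwise distinct --- any coincidence would put $v_0$ on $\ell_0$ --- so $\deg(v_0)\ge t$. Hence we may assume $t<n/37$, and in particular $n-t>\tfrac{36}{37}n$.

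The heart of the matter is then a quantitative Beck theorem: a non-collinear set of $n$ points with at most $t$ collinear determines at least $c\,n(n-t)$ lines for some explicit constant $c$. I would prove this by fixing a richness threshold $s$ (a constant, to be optimized at the end) and splitting $L$ into poor lines (fewer than $s$ points of $P$) and rich lines (at least $s$ points). Every pair of points of $P$ lies on a unique determined line, so $\binom n2=\sum_{\ell\in L}\binom{|\ell\cap P|}{2}$; applying the Szemer\'edi--Trotter theorem to the rich lines --- a dyadic decomposition over the number of points on a line, with $t<n/37$ controlling the top of the range --- shows that once $s$ is large enough the rich lines carry at most $\tfrac12\binom n2$ of these pairs. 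The poor lines then carry at least $\tfrac12\binom n2$ pairs, and as each poor line carries fewer than $\binom s2$ of them there are at least $\binom n2\big/(2\binom s2)$ poor lines; this is the desired bound $|L|\ge c\,n(n-t)$ with an explicit $c$. (Equivalently one can feed the number of edges of the ``path graph'' spanned by the poor lines into the crossing lemma.) To finish I would average: $\sum_{v\in P}\deg(v)=\sum_{\ell\in L}|\ell\cap P|\ge 2|L|\ge 2c\,n(n-t)>\tfrac{72c}{37}\,n^2$, so some point of $P$ lies on more than $\tfrac{72c}{37}\,n$ determined lines, which is at least $n/37$ provided the constants have been arranged so that $c\ge\tfrac1{72}$.

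The main obstacle is exactly this last proviso: pushing the explicit constant $c$ above $1/72$. The naive dyadic Szemer\'edi--Trotter bookkeeping is lossy, so one must optimize the threshold $s$, use the sharpest available form of Szemer\'edi--Trotter (or the crossing lemma with the best known crossing constant), and in the averaging step keep --- rather than discard --- the surplus incidences contributed by lines with more than two points. A secondary technical issue is a clean treatment of the interface between the dense and sparse regimes and of small values of $n$; tracking these carefully is presumably what forces the additive corrections that appear in the refined bound $\tfrac n{26}+2$.
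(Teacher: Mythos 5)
Your overall architecture (dichotomy between the dense case, where a point off a long line already has high degree, and the sparse case handled by averaging) is the same as the paper's, and your dense case and your averaging identity $\sum_{v}\deg(v)=\sum_{\ell\in L}|\ell\cap P|$ are both fine. The genuine gap is the one you yourself flag and do not close: your route passes through a quantitative Beck theorem $|L|\ge c\,n(n-t)$ and the lossy inequality $\sum_{v}\deg(v)\ge 2|L|$, which forces $c\ge 1/72$. The argument you sketch for Beck --- a fixed poor/rich threshold $s$ plus a dyadic Szemer\'edi--Trotter (or crossing-lemma) estimate on the rich lines --- is exactly the Payne--Wood computation, and after full optimization of $s$ and of the crossing constant it yields only about $c=1/98$ (improvable to $1/93$ with care), well short of $1/72$. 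No choice of the dense/sparse cutoff rescues this: balancing $t_0=2c(n-t_0)$ gives a point of degree $\frac{2c}{1+2c}n$, which reaches $n/37$ precisely when $c\ge 1/72$. So as written the plan proves something like ``a point in $n/50$ lines,'' not $n/37$.

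The paper sidesteps this by never counting lines at all: it lower-bounds the incidence count $I=\sum_i i\,s_i$ directly, comparing $\binom{n}{2}=\sum_i\binom{i}{2}s_i$ with $X\cdot I$ for a constant $X$ chosen so that $\binom{i}{2}\le X\,i$ up to correction terms, and it controls those corrections for small $i$ with Hirzebruch's inequality $s_2+\frac34 s_3\ge n+\sum_{i\ge 5}(2i-9)s_i$ and for medium and large $i$ with Szemer\'edi--Trotter. This is precisely the ``keep rather than discard the surplus incidences'' idea you mention parenthetically, but implementing it is the whole content of the proof, and the Hirzebruch input (which your toolkit omits) is what pushes the constants into the required range --- indeed it is also what lets the paper improve Beck's constant to $1/61$, which, had you been allowed to quote it, would make your averaging route work since $1/61>1/72$. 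To repair your proof you would either need to import Hirzebruch's inequality into your rich/poor analysis, or restructure the sparse case around a direct lower bound on $I$ rather than on $|L|$.
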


For the first purpose of this article, we would like to give a new progress for the Weak Dirac conjecture. In particular, we prove the following:
\begin{main}\label{m1} Every non-collinear set $P$ of $n$ points contains a point in at least $\dfrac{n}{26} + 2$ lines determined by $P.$
	\end{main}

Moreover, relate to work on the Weak Dirac Conjecture, Beck gave the number of lines determined by $P.$ He proved the following theorem.
\begin{theorem}(\cite{B}) Every set $P$ of $n$ points with at most $l$ collinear determines at least $c_2n(n-l)$ lines, for some constant $c_2.$
	\end{theorem}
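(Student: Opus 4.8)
The statement to be proved is Beck's theorem, and, following the abstract, the plan is to establish it with the explicit constant $c_{2}=\frac{1}{61}$ by deducing it from the Main theorem in the spirit of Payne-Wood's passage from a Weak Dirac bound to a Beck-type bound. I would begin with trivial reductions: if $l\ge n$ then $n-l\le 0$ and there is nothing to prove, so assume $P$ non-collinear; and since replacing $l$ by the true maximum number $l_{0}\le l$ of collinear points of $P$ only strengthens the conclusion (a lower bound with $n-l_{0}\ge n-l$ in place of $n-l$ implies the one we want), we may assume $l$ is exactly that maximum, so $2\le l\le n-1$. Fix a small threshold $\alpha\in(0,1)$ and split into a \emph{many-collinear} regime $l\ge\alpha n$ and a \emph{few-collinear} regime $l<\alpha n$, with $\alpha$ to be pinned down during the optimisation.

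In the many-collinear regime some line $\lambda$ carries $l\ge\alpha n$ points of $P$, and there are $n-l$ points off $\lambda$. For each $x\in\lambda\cap P$, the lines through $x$ other than $\lambda$ partition the off-$\lambda$ points into blocks of size at most $l-1$, so $x$ lies on at least $\frac{n-l}{l-1}$ of them; two such lines taken from distinct $x,x'$ coincide only if the line joins two off-$\lambda$ points, so inserting the off-$\lambda$ points one at a time and charging their pencils against $\lambda\cap P$ yields at least $\sum_{i=1}^{n-l}\max(l-i+1,0)+1$ distinct determined lines, which for $l\ge\alpha n$ is a constant multiple of $n(n-l)$ with a constant comfortably above $\frac{1}{61}$. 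The Main theorem enters to cover the transitional values of $l$ near $\alpha n$, where this pencil count weakens: it supplies a point of $P$ on at least $\frac{n}{26}+2$ determined lines, and an iterated-deletion version — remove such a point, reapply the Main theorem to the still non-collinear remainder, and observe that two recorded pencils share at most one line (namely the line through the two deleted points) — produces at least $\sum_{i=1}^{t}\bigl(\frac{n-i+1}{26}+2\bigr)-\binom{t}{2}$ further distinct lines, which for a suitably chosen number $t$ of deletions matches the required bound in the transitional band.

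For the few-collinear regime $l<\alpha n$ the target is $\Omega(n^{2})\ge\frac{1}{61}n(n-l)$, and here I would invoke the Szemer\'{e}di--Trotter theorem (cited above) dyadically: the number of pairs of points of $P$ that lie on a line with more than a fixed constant number $k_{0}$ of points, but at most $l$ points, is $O(n^{2}/k_{0}+nl)$, hence below $\tfrac12\binom{n}{2}$ once $k_{0}$ is large enough and $\alpha$ small enough; the remaining at least $\tfrac12\binom{n}{2}$ pairs lie on lines with at most $k_{0}$ points, each of which accounts for at most $\binom{k_{0}}{2}$ pairs, so $P$ determines $\Omega(n^{2})$ lines. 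The decisive difficulty throughout is quantitative rather than structural: the pencil count, the iterated-deletion count, and the Szemer\'{e}di--Trotter count must be balanced — by choosing $\alpha$, the deletion length $t$, and $k_{0}$, and by carrying the numerical constant in the Szemer\'{e}di--Trotter estimate explicitly rather than treating it as a black box — so that the weakest of the three still exceeds $\frac{1}{61}n(n-l)$ uniformly over $2\le l\le n-1$. I expect the case boundary $l\approx\alpha n$, and the verification for small $n$ (absorbed into the constant), to require the most careful bookkeeping.
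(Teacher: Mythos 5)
This statement is Beck's theorem, which the paper only cites from \cite{B} and never proves; the paper's actual contribution here is the quantitative version, Main theorem \ref{m2} with $c_2=\frac{1}{61}$, proved via Theorem \ref{Theorem2}. Your outline is the classical two-regime argument (a pencil count through a long line when $l\ge\alpha n$, a dyadic Szemer\'edi--Trotter count of pairs on rich lines when $l<\alpha n$), and as a proof of the qualitative statement with \emph{some} constant $c_2$ it is structurally sound --- it is essentially how Payne--Wood obtain $\frac{1}{98}$. The paper proceeds differently: it runs a single unified count of the lines with at most $c$ points, bounding the small pairs via Hirzebruch's inequality (Theorem \ref{H}), the medium pairs via Szemer\'edi--Trotter, and the large pairs via the hypothesis on $l$, arriving directly at $L\ge A(c)n^2-B(c)nl$ and then converting this algebraically into $\frac{\epsilon}{2}n(n-l)$, with no case split on $l$ (the conversion tacitly assumes $l\le\epsilon n$; the complementary regime is left implicit).

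If, however, your target is the constant $\frac{1}{61}$ announced in the abstract, there is a genuine gap. Your few-collinear regime uses only Szemer\'edi--Trotter, and that is exactly the computation that bottoms out near $\frac{1}{98}$ (or $\frac{1}{93}$ per \cite{P}); the improvement to $\frac{1}{61}$ in this paper comes precisely from injecting Hirzebruch's inequality to discount the pairs on $2$-, $3$- and $4$-point lines, an ingredient your plan never invokes. Your device of iterating Main theorem \ref{m1} with deletions is also quantitatively far too weak: $\sum_{i=1}^{t}\bigl(\frac{n-i+1}{26}+2\bigr)-\binom{t}{2}$ is maximized near $t\approx\frac{n}{27}$ and yields only on the order of $\frac{n^2}{1400}$ lines. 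Finally, in the many-collinear regime two lines through distinct points of $\lambda$ (other than $\lambda$ itself) can never coincide at all, so that intermediate sentence is off, though your insertion count $\sum_{i=1}^{n-l}\max(l-i+1,0)$ is the right one; for $\alpha n\le l<\frac{n}{2}$ it gives about $\frac{\alpha^{2}}{2}n^{2}$, which forces $\alpha\gtrsim 0.18$ and thereby tightens the few-collinear regime further. In short: a correct skeleton for Beck's theorem with an unspecified constant, but not a viable route to $\frac{1}{61}$ without the Hirzebruch input.
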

In 2014, Payne - Wood also gave a remarkable improvement of Beck's theorem by proving the following.
\begin{theorem}(\cite{PW}) Every set $P$ of $n$ points with at most $l$ collinear determines at least $\dfrac{1}{98}n(n-l)$ lines.
	\end{theorem}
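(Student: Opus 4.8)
The plan is to derive the bound from Theorem~1 together with the Szemer\'{e}di--Trotter incidence theorem, splitting according to the size of $\ell$ relative to $n$. Write $t$ for the number of lines determined by $P$ and put $m:=n-\ell$. We may assume $P$ is non-collinear and that $\ell$ is the \emph{exact} maximum number of collinear points of $P$ (replacing $\ell$ by this value only strengthens the claim), so $1\le m\le n-1$; if $m=1$, or if $n$ is below an absolute constant, the inequality follows at once from the elementary fact that $t\ge n$ for every non-collinear set (then $t\ge n\ge\frac{n^2}{98}\ge\frac{1}{98}n(n-\ell)$). So assume $n$ and $m$ are large.

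\medskip
\noindent\emph{When $\ell$ is close to $n$ (i.e.\ $m$ small):} peel points off one at a time, always removing a point on as many determined lines as possible. Set $Q_0=P$ and $Q_{i+1}=Q_i\setminus\{v_i\}$; while $i<m$ the set $Q_i$ has $n-i>\ell$ points, hence is non-collinear, so Theorem~1 provides $v_i\in Q_i$ lying on at least $(n-i)/37$ lines of $Q_i$. At most $i$ of these lines pass through $\{v_0,\dots,v_{i-1}\}$; let $D_i$ be the rest, so $|D_i|\ge\frac{n-i}{37}-i$. Since each line of $D_i$ contains $v_i$ but no $v_j$ with $j<i$, the $D_i$ are pairwise disjoint, whence
\[
t\;\ge\;\sum_{i=0}^{m-1}\Big(\frac{n-i}{37}-i\Big)\;=\;\frac{m}{37}\,(n-19m+19),
\]
and an easy computation makes the last expression at least $\frac{1}{98}nm=\frac{1}{98}n(n-\ell)$ exactly when $m\le\frac{61n}{1862}+1$. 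This disposes of all $\ell\gtrsim 0.967\,n$.

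\medskip
\noindent\emph{When $\ell$ is small:} use Szemer\'{e}di--Trotter dyadically. If $m_j$ is the number of determined lines with at least $j$ points of $P$ then $m_j=O(n^2/j^3+n/j)$, while $\sum_{\text{lines }L}\binom{|L\cap P|}{2}=\binom{n}{2}$ gives $\binom{n}{2}=\sum_{j\ge2}(j-1)m_j$. Truncating at an absolute constant $j_0$ and bounding $\sum_{j>j_0}(j-1)m_j$ by the incidence estimate, one sees that when $\ell$ is at most a suitable absolute fraction of $n$ the lines with at most $j_0$ points carry a positive fraction of the $\binom{n}{2}$ pairs, so $t\ge c\,n^2$; choosing $j_0$ optimally (and the sharpest form of the incidence inequality) pushes $c$ above $\tfrac{1}{98}$, which is what is needed here since $\frac{1}{98}n(n-\ell)\le\frac{1}{98}n^2$.

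\medskip
\noindent\emph{The intermediate range, and the main obstacle.} For $\ell$ near $n/2$ neither bound above reaches the constant $\tfrac1{98}$ — the peeling sum has decayed, and the ``$+\,n\ell$'' error term in the incidence count has overtaken $\binom{n}{2}$. To cover this range I would fix a line $L_0$ with $\ell$ points, set $A=L_0\cap P$ and $B=P\setminus L_0$, and split the lines of $P$ into $L_0$, the lines meeting both $A$ and $B$, and the lines inside $B$ that miss $A$; since every point of $B$ sees the $\ell$ points of $A$ in $\ell$ distinct directions, careful accounting yields a relation of the form
\[
t\;\ge\;1+\Big(m\ell-\sum_{b\in B}d_B(b)\Big)+\#\{\text{lines determined by }B\},
\]
into which one feeds the theorem applied recursively to the $m$-point set $B$. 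Making this recursion close is the crux: the cross-term $m\ell-\sum_b d_B(b)$ helps only when $\ell$ is not far below $m$, so in the truly intermediate range one must iterate the construction — peeling off successive rich lines — and reconcile the accumulated cross-terms with the recursive estimate. The constant $\tfrac1{98}$ is precisely what allows the input constant $37$ of Theorem~1, the Szemer\'{e}di--Trotter constant, and the loss in this recursion to fit together across all three regimes; coordinating these constants, rather than any single step, is where the real work lies.
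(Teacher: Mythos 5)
First, note that the paper does not actually prove this statement: it is quoted from Payne--Wood, and what the paper proves is the stronger Main theorem \ref{m2} via Theorem \ref{Theorem2}. Measured against that argument, your proposal is not a proof but a plan with an explicitly unfilled hole. Your first regime (peeling with the $n/37$ weak Dirac bound) is correct as far as it goes, but it only covers $n-l\le \frac{61n}{1862}+1$, i.e.\ $l\gtrsim 0.967\,n$. Your second regime is also shakier than you state: summing the dyadic bound $m_j=O(n^2/j^3+n/j)$ over $j>j_0$ does \emph{not} give $t\ge cn^2$, because the term $n/j$ summed against the weight $j-1$ up to $j\approx l$ contributes an error of order $nl$ --- this is precisely why Theorem \ref{Theorem2} carries the correction $-\frac{(2c-8)\alpha}{c^2+3c-18}\,ln$. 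What that method actually yields is $t\ge An^2-Bnl$, which beats $\frac{1}{98}n^2$ only when $l$ is at most a few percent of $n$. That leaves essentially the entire interval $0.05\,n\lesssim l\lesssim 0.97\,n$ to your third regime, and there you only describe a recursion on $B=P\setminus L_0$ and say yourself that making it close is where the real work lies. That unclosed recursion \emph{is} the theorem; nothing in the proposal closes it, so the argument has a genuine gap covering almost the whole range of $l$.

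The route taken in the paper (and in Payne--Wood) shows what is missing. One proves a single inequality $L\ge A(c)n^2-B(c)nl$ valid for every $l$ at once --- Hirzebruch's inequality controls the lines with at most $c$ points, Szemer\'edi--Trotter with the crossing-lemma constants controls the medium lines, and the rich lines are absorbed into the $B(c)nl$ term --- and then converts it by the elementary observation that $\frac{2A}{1+2B}\ge\epsilon$ forces $An^2-Bnl\ge\frac{\epsilon}{2}n(n-l)$. The point you are missing is that the target $\frac{1}{98}n(n-l)$ itself decreases linearly in $l$, so the $-Bnl$ loss in the incidence count can be charged against the $-\frac{\epsilon}{2}nl$ slack in the target rather than against $\frac{1}{98}n^2$; it is this trade-off, not a three-way case split, that carries the bound through the intermediate range. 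If you want to salvage your outline, you must either import that algebraic conversion (and verify the resulting constants meet your peeling regime), or actually carry out and quantify the recursive construction you sketch --- as written, the middle regime is asserted, not proved.
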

We note that the number 98 can be instead by 93. The details can be found in \cite{P}.

For the final purpose, we would like to give some results for the number of lines with few points from $n$ points in plane. Then, we also give the following theorems.
\begin{main} \label{m2} Every set $P$ of $n$ points with at most $l$ collinear determines at least $\dfrac{1}{61}n(n-l)$ lines.
	\end{main}
\begin{main}\label{m3}
	Every set $P$ of $n$ points with at most $l$ collinear determines at least $\dfrac{1}{122}n(n-l)$ lines with at most 3 points.
\end{main}
\section{Auxiliary Results}
We list here some known results which are very helpful for the proofs of the main theorems.\\
The crossing number of a graph $G$, denoted by $cr(G),$ is the minimum number of
crossings in a drawing of $G.$ The following version due to J. Pach, R. Radoi\v{c}i\'{c}, G. Tardos and G. T\'{o}th \cite{PR} is the
strongest to date.
\begin{lemma} (Crossing lemma \cite{PR}).  \label{CL}
	For every graph with $n$ vertices and $m\geq \dfrac{103}{16}n$ edges, then
	\begin{equation*}
	cr(G)\geq \dfrac{1024m^3}{31827n^2}.
	\end{equation*}
\end{lemma}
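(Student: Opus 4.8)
The plan is to reconstruct the two-step proof of Pach--Radoi\v{c}i\'c--Tardos--T\'oth \cite{PR}: first an improved \emph{linear} crossing bound tailored to sparse graphs, and then a probabilistic amplification by random vertex sampling. The shape of the constants is a reliable guide here. Since $31827 = 3\cdot 103^2$ and $1024 = 4\cdot 16^2$, the target rewrites as $cr(G)\ge \frac{4\cdot 16^2}{3\cdot 103^2}\,\frac{m^3}{n^2}$, and the threshold $\frac{103}{16}$ is exactly the value $\frac{3b}{2a}$ produced by a base-case bound of slope $a=4$ and intercept coefficient $b=\frac{103}{6}$. I would therefore aim to prove, for \emph{every} graph $G$, the base-case inequality
\[ cr(G)\ge 4m-\tfrac{103}{6}(n-2), \]
and then amplify it.

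Granting the base case, the amplification is routine. Fix $p\in(0,1]$ and let $G_p$ be the random induced subgraph obtained by keeping each vertex independently with probability $p$. Starting from an optimal drawing of $G$, a crossing of the inherited drawing survives iff its four (distinct) endpoints survive and an edge survives iff its two endpoints do, so $\mathbb{E}[cr(G_p)]\le p^4 cr(G)$, $\mathbb{E}[m(G_p)]=p^2 m$ and $\mathbb{E}[n(G_p)]=pn$. Applying the base-case inequality to each realization of $G_p$ and taking expectations gives $p^4 cr(G)\ge 4p^2 m-\tfrac{103}{6}pn$, that is,
\[ cr(G)\ge \frac{4m}{p^2}-\frac{103}{6}\,\frac{n}{p^3}. \]
Optimizing the right-hand side over $p$ gives the stationary point $p=\frac{3}{2}\cdot\frac{103}{6}\cdot\frac{n}{4m}=\frac{103\,n}{16\,m}$, which lies in $(0,1]$ precisely under the hypothesis $m\ge\frac{103}{16}n$. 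Substituting this $p$ back yields $cr(G)\ge\frac{4\cdot 4^3}{27\cdot(103/6)^2}\,\frac{m^3}{n^2}=\frac{1024}{31827}\,\frac{m^3}{n^2}$, which is the claim.

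The main obstacle is the base case, which is the genuine content of \cite{PR} (``finding more crossings in sparse graphs''). The classical route through Euler's formula only gives $cr(G)\ge m-3(n-2)$, slope $1$, and the naive amplification from it yields the much weaker constant $\tfrac{1}{64}$; even the Pach--T\'oth bound $cr(G)\ge 5m-25(n-2)$ of slope $5$ only produces $\tfrac{1}{33.75}$. To reach the optimal trade-off I would argue through the refined estimates on graphs drawable with few crossings per edge (for instance $3(n-2),\,4(n-2),\,5(n-2)$ edges when at most $0,1,2$ crossings are permitted on each edge). Taking an optimal drawing of $G$ and classifying its edges by how many crossings they carry, these per-edge edge bounds force a sparse graph with many edges to spend many crossings; summing the per-edge crossing counts and halving then converts the small-crossing edge estimates into the linear lower bound on $cr(G)$. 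The delicate part, and the source of the precise constants $4$ and $\tfrac{103}{6}$, is calibrating this accounting so that the small-$k$ edge bounds are used sharply and the slope is maximized against the intercept; this is the step I expect to be genuinely hard rather than mechanical.
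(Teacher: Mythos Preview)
The paper does not prove this lemma at all: it appears in Section~2 among the ``Auxiliary Results'' and is simply quoted from Pach--Radoi\v{c}i\'c--Tardos--T\'oth~\cite{PR} with no argument given. Your proposal therefore already goes further than the paper itself.

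As an outline of the \cite{PR} proof your proposal is accurate. The probabilistic amplification step is carried out correctly, and your reverse-engineering of the constants is right: a linear base case $cr(G)\ge am-b(n-2)$ with $(a,b)=(4,\tfrac{103}{6})$ is precisely what the optimisation $p=\tfrac{3bn}{2am}$ needs in order to produce the threshold $\tfrac{103}{16}$ and the constant $\tfrac{4a^3}{27b^2}=\tfrac{1024}{31827}$. You are also correct that the substantive content of \cite{PR} is the base case, obtained there from sharp edge bounds for graphs drawable with at most $k$ crossings per edge ($k\le 3$); you identify this mechanism but, as you explicitly acknowledge, do not carry it through. Since the present paper treats the lemma as a black box, there is no in-paper proof to compare your gap against.
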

We set $E(H)$ to be the set of all edges of a graph $H.$ The visibility graph $G$ of a point set $P$ has vertex set $P$ , where $vw \in E(G)$ whenever the line segment $vw$ contains no other point in P (that is, $v$ and $w$ are consecutive on a line determined by $P$ ). For $i \geq 2,$ an $i-line$ is a line containing exactly $i$ points in $P.$ Let $s_i$
be the number of $i-lines.$ Let $G_i$ be the spanning subgraph of the visibility graph of $P$ consisting of all edges in $j-lines$ where $j\geq i.$ Note that since each $i-line$ contributes $i-1$ edges,$|E(G_i)| = \sum_{j\geq i}(j-1)s_j.$ We introduce some useful results:
\begin{theorem}\label{H} (Hirzebruch's Inequality \cite{H}). Let $P$ be a set of $n$ points with at most $n-3$ 	collinear. Then 
	\begin{equation*}
	s_2+\dfrac{3}{4}s_3 \ge n+ \sum\limits_{i \ge 5} (2i-9)s_i.
	\end{equation*}
\end{theorem}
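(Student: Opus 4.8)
The plan is to establish this inequality by its standard algebro-geometric route: dualize the point set into a complex line arrangement and apply the Bogomolov--Miyaoka--Yau inequality to a ramified abelian cover of the plane. By projective duality in $\mathbb{CP}^2$, replace each of the $n$ points of $P$ by a line, producing an arrangement $\mathcal{A}$ of $d=n$ lines in which a line determined by $P$ through exactly $i$ points becomes a point lying on exactly $i$ lines of $\mathcal{A}$. Writing $t_i$ for the number of such $i$-fold points of $\mathcal{A}$, we have $t_i=s_i$ for all $i$, so the claim is equivalent to Hirzebruch's line-arrangement inequality $t_2+\frac34 t_3 \ge d+\sum_{i\ge 5}(2i-9)t_i$. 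The hypothesis that at most $n-3$ points of $P$ are collinear dualizes to the statement that no point of $\mathcal{A}$ lies on more than $d-3$ lines; in particular $t_{d-1}=t_d=0$, which is exactly the nondegeneracy required below (a near-pencil, where the bound fails, is excluded).

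First I would construct the Hirzebruch--Kummer covering attached to $\mathcal{A}$. Blow up every point of multiplicity $\ge 2$ to obtain a smooth rational surface $X\to\mathbb{CP}^2$ on which the total transform of $\mathcal{A}$ together with the exceptional curves forms a normal-crossing divisor; then, writing the $d$ lines as linear forms $\ell_1,\dots,\ell_d$, form the normalized abelian $(\mathbb{Z}/m)^{d-1}$-cover $Y\to X$ branched along the proper transforms of the lines, where $m$ is a large integer to be fixed at the end. The condition $t_{d-1}=t_d=0$ is precisely what guarantees that, for $m$ sufficiently large, $Y$ is a smooth surface of general type, so that the Bogomolov--Miyaoka--Yau inequality $c_1^2(Y)\le 3\,c_2(Y)$ applies.

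Next I would carry out the Chern-number bookkeeping. Using the standard formulas for the Euler number and canonical self-intersection of a Kummer cover of a blown-up plane, I would express $c_1^2(Y)$ and $c_2(Y)$ as explicit polynomials in $m$, $d$, and the combinatorial data $f_0=\sum_{i\ge 2}t_i$ and $f_1=\sum_{i\ge 2} i\,t_i$, using the incidence identity $\binom{d}{2}=\sum_{i\ge 2}\binom{i}{2}t_i$ (each pair of dual lines meets in one point) to eliminate one relation. Imposing $c_1^2(Y)\le 3c_2(Y)$, dividing by the degree of the cover, and passing to the limit $m\to\infty$ collapses the inequality to a linear relation among $d$ and the $t_i$; reading off the coefficient attached to each $t_i$ and substituting $t_i=s_i$ yields exactly $s_2+\frac34 s_3 \ge n+\sum_{i\ge 5}(2i-9)s_i$.

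The hard part, I expect, is two-fold. First, one must verify that $Y$ is genuinely of general type so that BMY is available: this is where the hypothesis $t_{d-1}=t_d=0$ is indispensable, since for a near-pencil the associated cover is not of general type and the inequality genuinely fails. Second, the Chern-class computation on the blown-up surface must be executed without sign or bookkeeping errors, as each exceptional divisor over a point of multiplicity $i$ contributes terms weighted by $i$ that must be tracked through the cover. As a cleaner alternative that avoids the explicit covering, I would keep in reserve Langer's orbifold (logarithmic) form of the Miyaoka--Yau inequality applied directly to the pair $(\mathbb{CP}^2,\mathcal{A})$, which reproduces the same inequality in a single step.
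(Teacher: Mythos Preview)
The paper does not prove this statement at all: Theorem~\ref{H} is listed among the auxiliary results, attributed to Hirzebruch, and is simply quoted from \cite{H} without argument. So there is no ``paper's own proof'' to compare against; your proposal is an attempt to supply the original proof rather than to reproduce anything the authors did.

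That said, your outline is faithful to Hirzebruch's actual argument in \cite{H}: dualize, blow up the singular points of the line arrangement, pass to the abelian $(\mathbb{Z}/m)^{d-1}$-cover, compute $c_1^2$ and $c_2$ via the ramification and incidence data, and apply Bogomolov--Miyaoka--Yau. One small correction: the specific coefficients in the stated inequality ($1$ on $s_2$, $\tfrac{3}{4}$ on $s_3$, and $2i-9$ on the right) arise from the fixed choice $m=3$, not from letting $m\to\infty$; Hirzebruch works with the explicit cubic-root cover and reads off the inequality directly, rather than taking an asymptotic limit. Your alternative via Langer's logarithmic/orbifold Miyaoka--Yau inequality is also valid and indeed recovers (a sharpening of) the same bound. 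Finally, note that the hypothesis ``at most $n-3$ collinear'' dualizes to $t_i=0$ for all $i\ge d-2$, which is slightly stronger than the $t_{d-1}=t_d=0$ you quote, though the latter already suffices for the cover to be of general type.
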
 
\begin{theorem}\label{ST} (Szemer\'{e}di-Trotter \cite{ST}). Let $\alpha$ and $\beta$ be positive constants such that every graph $H$ with $n$ vertices and $m \geq \alpha n$ edges satisfies
\begin{equation*}
cr(H) \geq \dfrac{m^3}{\beta n^2}
\end{equation*}	
	Let $P$ be a set of $n$ points in the plane. Then
\begin{align*}
& a)\quad \left|E(G_i)\right|=\sum\limits_{j \ge i}(j-1)s_j \le max\{\alpha n,\dfrac{\beta n^2}{2(i-1)^2}\},\\
& b)\quad \sum_{j \ge i}s_j \le max\{\dfrac{\alpha n}{i-1}, \dfrac{\beta n^2}{2(i-1)^3}\}.
\end{align*}
\end{theorem}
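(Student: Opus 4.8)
The plan is to apply the crossing-number hypothesis to the visibility subgraph $G_i$ in its straight-line drawing, with the points of $P$ as the $n$ vertices, and to combine it with an elementary geometric upper bound on the number of crossings. Set $m:=|E(G_i)|=\sum_{j\ge i}(j-1)s_j$ and let $L_i:=\sum_{j\ge i}s_j$ denote the number of lines carrying at least $i$ points. The first ingredient is a purely combinatorial count: since each line with $j\ge i$ points contributes $j-1\ge i-1$ edges to $G_i$,
\[
m=\sum_{j\ge i}(j-1)s_j\ \ge\ (i-1)\sum_{j\ge i}s_j=(i-1)L_i,\qquad\text{so}\qquad L_i\le \frac{m}{i-1}.
\]

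The second ingredient is a geometric bound on $cr(G_i)$. In the straight-line drawing, two edges lying on the same determined line are collinear interior-disjoint segments and hence never cross, while two edges lying on distinct lines $\ell,\ell'$ can only meet at the single point $\ell\cap\ell'$ and so contribute at most one crossing. Since only lines with at least $i$ points appear in $G_i$, each crossing is charged to a distinct unordered pair of such lines, giving
\[
cr(G_i)\ \le\ \binom{L_i}{2}\ \le\ \frac{L_i^2}{2}\ \le\ \frac{m^2}{2(i-1)^2},
\]
where the last step uses $L_i\le m/(i-1)$. As any particular drawing furnishes an upper bound for the minimum-crossing quantity $cr(G_i)$, this estimate is legitimate.

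With these two bounds in hand, part a) follows by a dichotomy on the density of $G_i$. If $m<\alpha n$, then trivially $m\le\max\{\alpha n,\beta n^2/(2(i-1)^2)\}$. Otherwise $m\ge\alpha n$, so the hypothesis yields $cr(G_i)\ge m^3/(\beta n^2)$; comparing this with the geometric bound $cr(G_i)\le m^2/(2(i-1)^2)$ and cancelling $m^2$ gives $m\le \beta n^2/(2(i-1)^2)$. In both cases $m\le\max\{\alpha n,\beta n^2/(2(i-1)^2)\}$, which is a). Finally, dividing a) through by the positive quantity $i-1$ and using $L_i\le m/(i-1)$ gives
\[
\sum_{j\ge i}s_j=L_i\le\frac{m}{i-1}\le\frac{1}{i-1}\max\Bigl\{\alpha n,\frac{\beta n^2}{2(i-1)^2}\Bigr\}=\max\Bigl\{\frac{\alpha n}{i-1},\frac{\beta n^2}{2(i-1)^3}\Bigr\},
\]
which is b). The main obstacle is the geometric crossing estimate: one must argue carefully that same-line edges never cross and that each pair of distinct lines accounts for at most one crossing, so that the total is genuinely controlled by $\binom{L_i}{2}$ rather than by a larger count with multiplicities at heavily incident intersection points; everything else is bookkeeping and the single case distinction.
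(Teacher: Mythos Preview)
Your argument is correct and is precisely the standard Sz\'ekely-style derivation: bound the crossings in the straight-line drawing of $G_i$ by $\binom{L_i}{2}\le m^2/(2(i-1)^2)$ (using that distinct determined lines meet in a single point, so at most one pair of edges per pair of lines can cross), then compare with the crossing-lemma lower bound in the dense case $m\ge\alpha n$. The deduction of b) from a) via $L_i\le m/(i-1)$ is fine.

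The paper itself does not supply a proof of this statement; it is recorded as a known result and attributed to \cite{ST} (with the explicit constants coming from the crossing lemma of \cite{PR}). So there is nothing to compare against---your write-up simply fills in what the paper leaves to the literature, and does so correctly.
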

\section{A new progress on Weak Dirac's conjecture}
In order to get the main theorem \ref{m1}, we refine the method of Payne-Wood to find the largest number $\varepsilon$ such that every set $P$ of $n$ non-collinear points in the plane at most $\varepsilon n +2$ collinear points, the arrangement of $P$ has at least $\varepsilon n^2+2n$ point- line incidents. We start by the following result.
\begin{theorem}\label{Theorem1} Let $\alpha$ and $\beta$ be positive constants such that every graph $G$ with $n$ vertices and $m \ge \alpha n$ edges satisfies $cr(G) \ge \dfrac{m^3}{\beta n^2}$.\\
	
	Fix two integers $c \ge 8, 0\leq q\leq 3$ and a real number $\epsilon \in (0;\dfrac{1}{2}), \epsilon n \geq 2$. Let $h:= \dfrac{c(c-2)}{5c-18}$. Then for every set $P$ of $n$ points in the plane with at most $\epsilon n + q$ collinear points, the arrangemnet of $P$ has at least $\delta n^2+rn$ point- line incident,\\
	where 
	\begin{align*}
	&\delta = \dfrac{1}{h+1} \left(1- \epsilon \alpha -\dfrac{\beta}{2} \left( \dfrac{-18(c-2)}{c^3(5c-18)} +\sum \limits_{i \ge c}\dfrac{i+1}{i^3} \right) \right),\\ &r=\dfrac{2h-1+\alpha}{h+1}.
	\end{align*}
\end{theorem}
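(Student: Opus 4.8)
\emph{Proof proposal.} The plan is to refine Payne--Wood's counting scheme. Write $s_i$ for the number of $i$-lines and $I=\sum_{i\ge 2}i\,s_i$ for the number of point--line incidences, so that the assertion is precisely $I\ge\delta n^{2}+rn$. Two elementary facts drive the argument. Counting pairs of points gives $\sum_{i\ge 2}i(i-1)s_i=n^{2}-n$, and substituting $s_2=\binom n2-\sum_{i\ge 3}\binom i2 s_i$ into $I=2s_2+\sum_{i\ge3}i s_i$ yields the exact identity $I=n^{2}-n-\sum_{i\ge3}i(i-2)s_i$. Hirzebruch's Inequality (Theorem~\ref{H}), which applies because $\epsilon<\tfrac12$ and $q\le 3$ force at most $n-3$ collinear points as soon as $n\ge 12$ (the cases $5\le n\le 11$ — finite, since $\epsilon n\ge 2$ forces $n>4$ — are checked directly), gives $s_2\ge n+\sum_{i\ge 5}(2i-9)s_i-\tfrac34 s_3$ and hence $I\ge 2n+\tfrac32 s_3+4s_4+\sum_{i\ge5}(5i-18)s_i$.

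First I would average these two expressions for $I$ with weights $\tfrac h{h+1}$ and $\tfrac1{h+1}$; after multiplying by $h+1$ this gives
\[
(h+1)I\ \ge\ n^{2}+(2h-1)n+\Bigl(\tfrac32h-3\Bigr)s_3+(4h-8)s_4+\sum_{i\ge5}\bigl(h(5i-18)-i(i-2)\bigr)s_i .
\]
The number $h=\tfrac{c(c-2)}{5c-18}$ is chosen exactly so that $c$ is a root of $x(x-2)-h(5x-18)=x^{2}-(2+5h)x+18h$, which makes $h(5i-18)-i(i-2)=-(i-c)\bigl(i-\tfrac{18h}{c}\bigr)$ for $i\ge 5$; since $\tfrac{18h}{c}<5$ whenever $c\ge 8$, this is $\le 0$ for $i\ge c$ and $\ge 0$ for $5\le i\le c$, while $h\ge 2$ (which always holds) handles the $s_3,s_4$ coefficients. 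Thus every coefficient with $3\le i\le c$ is nonnegative and may be discarded, leaving
\[
(h+1)I\ \ge\ n^{2}+(2h-1)n-\sum_{i>c}(i-c)\Bigl(i-\tfrac{18h}{c}\Bigr)s_i .
\]
The hypothesis $c\ge 8$ enters precisely here: one needs $h\ge\tfrac{i(i-2)}{5i-18}$ for all $5\le i\le c$, and since that ratio decreases from $i=5$ to $i=6$ and then increases, its maximum over $\{5,\dots,c\}$ is $\max\{\tfrac{15}{7},h\}$, which equals $h$ only when $c\ge 8$ (for $c=7$ the $i=5$ coefficient would turn negative).

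It remains to show that the tail $T:=\sum_{i>c}(i-c)\bigl(i-\tfrac{18h}{c}\bigr)s_i$ is at most $\bigl(\epsilon\alpha+\tfrac\beta2(A+B)\bigr)n^{2}-\alpha n$, with $A=\tfrac{-18(c-2)}{c^{3}(5c-18)}$ and $B=\sum_{i\ge c}\tfrac{i+1}{i^{3}}$; this is the crux. I would rewrite the summands of $T$ through $E_m=|E(G_m)|=\sum_{j\ge m}(j-1)s_j$ and $S_m=\sum_{j\ge m}s_j$ — using $\sum_{j\ge m}j\,s_j=E_m+S_m$, the telescoping identity $\sum_{i\ge m}(i-1)^{2}s_i=(m-1)E_m+\sum_{i>m}E_i$, and Abel summation — and then split the range $i>c$ at the level $m_0=1+\sqrt{\beta n/2\alpha}$ where the bounds of Theorem~\ref{ST} change regime. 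On $c<i\le m_0$, substituting $E_m\le\tfrac{\beta n^{2}}{2(m-1)^{2}}$ and $S_m\le\tfrac{\beta n^{2}}{2(m-1)^{3}}$ and carrying out the telescoping while retaining the exact contribution of the terms near the boundary $i=c$ (rather than crudely bounding them away) collapses the sums to exactly $\tfrac\beta2(A+B)n^{2}$ — the negativity of $A$ being the record of that retained boundary contribution. On $m_0<i\le\epsilon n+q$ (beyond which $s_i=0$, since at most $\epsilon n+q$ points are collinear) one is in the $\alpha n$ regime, and bounding this part by $(\epsilon n+q-m_0)\alpha n$ plus lower-order terms, then absorbing the lower-order terms and the $q$-dependence into the $\Theta(n^{3/2})$ slack produced by $m_0\asymp\sqrt n$, gives at most $\epsilon\alpha n^{2}-\alpha n$. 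Adding the two regimes and substituting back yields $(h+1)I\ge\bigl(1-\epsilon\alpha-\tfrac\beta2(A+B)\bigr)n^{2}+(2h-1+\alpha)n=(h+1)(\delta n^{2}+rn)$.

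The hard part is this tail estimate: because $A<0$ the target is marginally sharper than the naive ``sum of Theorem~\ref{ST}'s bounds over all thresholds $\ge c$,'' so the Abel summation must be done with no wasted slack and one must verify that every surviving constant collapses to exactly $\tfrac{-18(c-2)}{c^{3}(5c-18)}+\sum_{i\ge c}\tfrac{i+1}{i^{3}}$. By comparison the remaining ingredients — the two counting identities, Hirzebruch's Inequality, the unimodality argument pinning down $h$ and the bound $c\ge 8$, and the small-$n$ verification — are routine.
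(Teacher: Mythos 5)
Your proposal is correct and follows essentially the same route as the paper: Hirzebruch's inequality weighted by $h=\tfrac{c(c-2)}{5c-18}$ (chosen so that $c$ is a root of $x(x-2)-h(5x-18)$, which is exactly the paper's verification that $X=\tfrac{h+1}{2}$ dominates all small coefficients), combined with the pair-counting identity, and the tail $i>c$ handled by telescoping the Szemer\'edi--Trotter bounds with the boundary layer at $i=c+1$ retained to produce the negative term $\tfrac{-18(c-2)}{c^3(5c-18)}$, and the top range killed by the $\alpha n$ edge bound together with the $\epsilon n+q$ collinearity cap. The only differences are bookkeeping ones --- you average two expressions for $I$ rather than comparing pairs with $X$ times incidences category by category, and you split the tail at the deterministic threshold $m_0\asymp\sqrt{n}$ instead of the paper's data-dependent $k$ --- and these yield the same constants $\delta$ and $r$.
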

\begin{proof}
	Let $J=\{2;3;...;\left\lfloor {\epsilon n} \right\rfloor +q \}$ and assume that $\epsilon n \geq 2.$ Considering the visibility graph $G$ of $P$ and its subgraphs $G_i$ as defined previously. Let $k$ be the minimum integer such that $|E(G_k)| \le \alpha n$. If there is no such $k$ then let $k= \left\lfloor {\epsilon n} \right\rfloor+q +1$. An integer $i \in J$ is \textit{large} if $i \ge k$ , and is \textit{small} if $i \le c.$ An integer in $J$ that is neither small nor large is \textit{medium}.
	\vskip 0.2cm
	\noindent Recall that an $i$-line is a line containing exactly $i$ points in $P$.
	An \textit{$i$-pair} is a pair of points in an $i$-line. A \textit{small pair} is an $i$-pair for some small $i$. Define   \textit{large pair}, \textit{medium pair} analogously. Let $P_S, P_M$ and $P_L$ denote the number of small, medium and large pairs respectively. An \textit{i-incidence} is an incidence between a point of $P$ and an $i$-line. A \textit{small incidence} is an $i$-incidence for some small $i$, and define \textit{medium, large incidences} analogously. Let $I_S, I_M$ and $I_L$ denote the number of small, medium and large incidences respectively and let $I$ denote the total number of incidences. Since every $s_i$ has $i$ points incidence with its, then  
	\begin{equation*}
	I=\sum \limits_{i \in J} is_i=I_S+I_M+I_L
	\end{equation*}
	Because  $P$ has no more than $\dfrac{n}{2}$ collinear points and $n \ge 5$, thus $ \lfloor \epsilon n \rfloor +q \leq  \lfloor \dfrac{n}{2} \rfloor \leq n-3$. Therefore, for $n$ points of $P$ has no more than $n-3$ collinear points. Applying the Hirzebruch's Inequality (Theorem \ref{H}), we have 
	\begin{equation*}
	s_2+\dfrac{3}{4}s_3 \ge n+ \sum\limits_{i \ge 5}(2i-9)s_i.
	\end{equation*}
	Since $h>0$ then, 
	$$hs_2+\dfrac{3}{4}hs_3-hn-h\sum\limits_{i \ge 5}(2i-9)s_i \ge 0 .$$
	\begin{eqnarray*}
		P_S &=& \sum \limits_{i=2}^{c} \left(\begin{array}{*{20}{c}}
			i\\2
		\end{array} \right)
		s_i \\
		&=& s_2+ 3s_3+ 6s_4 +\sum\limits_{i=5}^{c}\left(\begin{array}{*{20}{c}}
			i\\2
		\end{array} \right)
		s_i\\
		&\le& (h+1)s_2+(\dfrac{3h}{4}+3)s_3+6s_4+\sum\limits_{i=5}^{c}\left(\begin{array}{*{20}{c}}
			i\\2
		\end{array} \right)
		s_i-hn-h\sum\limits_{i \ge 5}(2i-9)s_i\\
		&=& \dfrac{h+1}{2}.2s_2+\dfrac{h+4}{4}.3s_3+\dfrac{3}{2}.4h_4+\sum\limits_{i=5}^{c}\left(\dfrac{i-1}{2}-2h+\dfrac{9h}{i}\right)is_i\\
		&-&h\sum\limits_{i = c+1}^{k-1}(2i-9)s_i-h\sum\limits_{i \ge k}(2i-9)s_i-hn\\
		&\le& \dfrac{h+1}{2}.2s_2+\dfrac{h+4}{4}.3s_3+\dfrac{3}{2}.4h_4+\sum\limits_{i=5}^{c}\left(\dfrac{i-1}{2}-2h+\dfrac{9h}{i}\right)is_i\\
		&-&h\sum\limits_{i= c+1}^{k-1}(2-\dfrac{9}{c+1})is_i-h\sum\limits_{i \ge k}(2-\dfrac{7}{c})(i-1)s_i-hn.
	\end{eqnarray*}
	Setting $X:=max\{\dfrac{h+1}{2};\dfrac{h+4}{4};\dfrac{3}{2};max_{5\le i\le c}(\dfrac{i-1}{2}-2h+\dfrac{9h}{i})\}$ implies that,
	\begin{equation} \label{1}
	P_S \le X I_S-h\sum\limits_{i=c+1}^{k-1}(2-\dfrac{9}{c+1})is_i-h\sum\limits_{i \ge k}(2-\dfrac{7}{c})(i-1)s_i-hn.
	\end{equation} 
	Let $\gamma(h,i)=\dfrac{i-1}{2}-2h+\dfrac{9h}{i}$ \hspace{0.2 cm} for \hspace{0.2 cm} $5 \le i \le c$.\\
	We have: $\gamma_i^{"} \ge 0 \ \ \forall i \in (5,c)$ $\Rightarrow \gamma(h,i)_{max}= \gamma(h,5)=2- \dfrac{h}{5}$\\
	 or  $\gamma(h,i)_{max}= \gamma(h,c)=\dfrac{c-1}{2}-2h+\dfrac{9h}{c}$ for $c \ge 8$.
	
	Clearly, $h(c) =\dfrac{c(c-2)}{5c-18}$ has minimum value $\dfrac{24}{11}$ when $c=8$. Hence,
	\begin{eqnarray*}
		\dfrac{h+1}{2} &\ge& \dfrac{3}{2} \\
		\dfrac{h+1}{2} &\ge& \dfrac{h+4}{4} \\
		\dfrac{h+1}{2} &\ge& 2-\dfrac{h}{5} \\
		\dfrac{h+1}{2} &=& \dfrac{c-1}{2}-2h+\dfrac{9h}{c}.
	\end{eqnarray*}
	Thus, $X= \dfrac{h+1}{2}.$
	\vskip 0.2cm
	
	On the other hand, if $i \in J$ is medium ($c<i<k$) then $i$ is not large. Therefore, $\sum\limits_{j \ge i}(j-1)s_j >\alpha n$.
	Because if $\sum\limits_{j \ge i}(j-1)s_j \le \alpha n$ then $|E(G_i)| \le \alpha n$, contradict with minimum property of $k$.
	Using part (a) and (b) of the Szemerédi- Trotter theorem \ref{ST}, 
	\begin{equation}\label{2}
	\sum\limits_{j \ge i}js_j =\sum\limits_{j \ge i}(j-1)s_j+ \sum\limits_{j \ge i}s_j 
	\le \dfrac{\beta n^2}{2(i-1)^2}+\dfrac{\beta n^2}{2(i-1)^3}=\dfrac{\beta n^2 i}{2(i-1)^3}. 
	\end{equation}
	Given $X$ as above, we have
	\begin{eqnarray*}
		P_M-XI_M&=&\left(\sum\limits_{i=c+1}^{k-1}\left(\begin{array}{*{20}{c}}
			i\\2
		\end{array} \right)
		s_i\right)-X\left(\sum\limits_{i=c+1}^{k-1}is_i\right) \\
		&=&\dfrac{1}{2} \sum\limits_{i=c+1}^{k-1}(i-1-2X)is_i.
	\end{eqnarray*}
	Combining with (\ref{1}), we get
	\begin{align} \label{3}
	P_S+P_M \le XI_S-hn+XI_M+\dfrac{1}{2}\sum\limits_{i=c+1}^{k-1}\left(i-1-2X-4h+\dfrac{18h}{c+1}\right)is_i-h(2-\dfrac{7}{c})|E(G_k)|.
	\end{align} 
	We define 
	\begin{align*}
	Y&=c-5h-2+\dfrac{18h}{c+1}\\
	&=c -2- 5\dfrac{c(c-2)}{5c-18}+\dfrac{18c(c-2)}{(c+1)(5c-18)}\\
	&=\dfrac{-18(c-2)}{(c+1)(5c-18)}.
	\end{align*}
	This implies $-1 < Y < 0$ with $c\geq 8.$ Thus we have,
	\begin{align*}
		T= &\dfrac{1}{2}\sum\limits_{i=c+1}^{k-1}\left(i-1-2X-4h+\dfrac{18h}{c+1}\right)is_i= \dfrac{1}{2} \sum\limits_{i=c+1}^{k-1}(i-c+Y)is_i \\
		&=\dfrac{1}{2}\left(\sum\limits_{i=c+1}^{k-1}\sum\limits_{j= i}^{k-1}js_j\right)+\dfrac{Y}{2}\left(\sum\limits_{i= c+1}^{k-1}is_i\right)\\
		&\leq\dfrac{1}{2}\left(\sum\limits_{i=c+1}^{k-1}\sum\limits_{j\geq i}js_j\right)+\dfrac{Y}{2}\left(\sum\limits_{i\geq c+1}is_i\right).
	\end{align*}
		Applying (\ref{2}) and $Y+1 > 0$, this yields
	\begin{align}\label{4}
		T \le \dfrac{1}{2} \sum\limits_{i \ge c+1} \dfrac{\beta n^2i}{2(i-1)^3} +\dfrac{Y}{2}.\dfrac{\beta n^2(c+1)}{2c^3}= \dfrac{\beta n^2}{4}\left(Y\dfrac{c+1}{c^3}+\sum\limits_{i \ge c}\dfrac{i+1}{i^3}\right). 
		 	\end{align}
	Finally, we have
	\begin{equation} \label{5}
	P_L -XI_L=\sum\limits_{i=k}^{\left\lfloor \varepsilon n \right\rfloor + q}\left(\begin{array}{*{20}{c}}
	i\\2
	\end{array} \right)
	s_i-X\sum\limits_{i\geq k}is_i \le \dfrac{\epsilon n+q}{2} \sum\limits_{i \ge k}(i-1)s_i-X\sum\limits_{i\geq k}(i-1)s_i\\
	=(\dfrac{\epsilon n+q}{2}-X)|E(G_k)|.
	\end{equation}
	Combining (\ref{3}), (\ref{4}), (\ref{5}), we get
	\begin{align*}
	P_S+P_M+P_L& \leq X(I_S+I_M+I_L)-hn\\
	&+\dfrac{\beta n^2}{4}(Y\dfrac{c+1}{c^3}+\sum\limits_{i\ge c}\dfrac{i+1}{i^3})+\dfrac{1}{2}(\epsilon n+q-2X-4h+\dfrac{7h}{c})|E(G_k)|\\
	&\leq XI-hn+\dfrac{\beta n^2}{4}(Y\dfrac{c+1}{c^3}+\sum\limits_{i\ge c}\dfrac{i+1}{i^3})+\dfrac{1}{2}(\epsilon n-2)|E(G_k)|\ (\text{ by}\ 1\leq q\leq 3, c \geq 8)\\
	&\leq XI-hn+\dfrac{\beta n^2}{4}(Y\dfrac{c+1}{c^3}+\sum\limits_{i\ge c}\dfrac{i+1}{i^3})+\dfrac{1}{2}(\epsilon n-2)\alpha n.
	\end{align*}
	On the other hand, we have $P_S+P_M+P_L=	\left(\begin{array}{*{20}{c}}
	n\\2
	\end{array} \right)
	= \dfrac{1}{2}(n^2-n).$\\	
	Thus, we get
	\begin{eqnarray*}
		\dfrac{1}{2}(n^2-n)\le XI-hn+\dfrac{\beta n^2}{4}(Y\dfrac{c+1}{c^3}+\sum\limits_{i\ge c}\dfrac{i+1}{i^3})+\dfrac{\epsilon \alpha n^2}{2}-\alpha n.
	\end{eqnarray*}
	$\Rightarrow I \ge \dfrac{1}{2X}\left(1- \epsilon \alpha -\dfrac{\beta}{2}\left(Y\dfrac{c+1}{c^3}+\sum\limits_{i \ge c}\dfrac{i+1}{i^3}\right)\right)n^2+\dfrac{2h-1+\alpha}{2X}n$. \\
	
	Since $X=\dfrac{h+1}{2}$ and $Y=\dfrac{-18(c-2)}{c^3(5c-18)}$ then,
	\begin{align*}
	I &\ge \dfrac{1}{h+1} \left(1- \epsilon \alpha -\dfrac{\beta}{2} \left( \dfrac{-18(c-2)}{c^3(5c-18)} +\sum \limits_{i \ge c}\dfrac{i+1}{i^3} \right) \right)n^2 +\dfrac{2h-1+\alpha}{h+1}n\\
	&=\delta n^2 +rn.
	\end{align*}
\end{proof}
\begin{theorem}\label{t11}
	Every set $P$ of $n$ non-collinear points in the plane with at most $\dfrac{n}{26}+2$ collinear points, the arrangement of $P$ has at least $\dfrac{n^2}{26}+2n$ point- line incidents.
\end{theorem}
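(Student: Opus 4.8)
The plan is to deduce Theorem~\ref{t11} from Theorem~\ref{Theorem1} whenever $\tfrac{n}{26}\ge 2$, and to dispose of the remaining values of $n$ by an elementary count. First I would fix the constants furnished by the Crossing Lemma: Lemma~\ref{CL} says that every graph on $n$ vertices with $m\ge\tfrac{103}{16}n$ edges satisfies $cr(G)\ge\tfrac{1024m^{3}}{31827n^{2}}$, so the hypothesis of Theorem~\ref{Theorem1} holds with $\alpha=\tfrac{103}{16}$ and $\beta=\tfrac{31827}{1024}$. I would then invoke Theorem~\ref{Theorem1} with $\epsilon=\tfrac{1}{26}$, $q=2$ and $c=46$. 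These are admissible ($0<\epsilon<\tfrac12$, $0\le q\le 3$, $c\ge 8$), the choice $q=2$ is what makes the hypothesis ``at most $\epsilon n+q$ collinear'' literally read ``at most $\tfrac{n}{26}+2$ collinear'', and one computes $h=\tfrac{c(c-2)}{5c-18}=\tfrac{2024}{212}=\tfrac{506}{53}$. Theorem~\ref{Theorem1} then yields at least $\delta n^{2}+rn$ point--line incidences, so it remains to verify $\delta\ge\tfrac1{26}$ and $r\ge 2$.

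The bound $r\ge 2$ is immediate: because $\alpha=\tfrac{103}{16}>3$ we get $2h-1+\alpha>2h+2=2(h+1)$, hence $r=\tfrac{2h-1+\alpha}{h+1}>2$. The inequality $\delta\ge\tfrac1{26}$ is the real content; clearing the factor $\tfrac1{h+1}$ it is equivalent to
\[
1-\frac{103}{416}-\frac{31827}{2048}\Bigl(\frac{-792}{20635232}+\sum_{i\ge 46}\frac{i+1}{i^{3}}\Bigr)\ \ge\ \frac{559}{1378}.
\]
To confirm it I would write $\sum_{i\ge 46}\frac{i+1}{i^{3}}=\sum_{i\ge 46}\frac1{i^{2}}+\sum_{i\ge 46}\frac1{i^{3}}$ and estimate both tails sharply: from $\frac1{i^{2}}<\frac1{i-1/2}-\frac1{i+1/2}$ one telescopes to $\sum_{i\ge 46}\frac1{i^{2}}<\frac1{c-1/2}=\frac{2}{91}$, and from monotonicity of $x^{-3}$ one gets $\sum_{i\ge 46}\frac1{i^{3}}<\int_{45}^{\infty}x^{-3}\,dx=\frac1{4050}$, so $\sum_{i\ge 46}\frac{i+1}{i^{3}}<\frac2{91}+\frac1{4050}$; substituting this upper bound into the display (and keeping the exact value of the negative term) makes the inequality routine to check. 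With both $\delta\ge\tfrac1{26}$ and $r\ge 2$ in hand, every $P$ as in the statement with $n\ge 52$ has at least $\delta n^{2}+rn\ge\tfrac{n^{2}}{26}+2n$ incidences.

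For $5\le n\le 51$ the hypothesis ``at most $\tfrac{n}{26}+2$ collinear'' forces at most $3$ collinear points, so $s_{i}=0$ for $i\ge 4$. Then $\binom n2=\sum_{i}\binom i2 s_{i}=s_{2}+3s_{3}$, while the number of incidences is $I=\sum_{i}i\,s_{i}=2s_{2}+3s_{3}=n(n-1)-3s_{3}\ge\tfrac{n(n-1)}{2}$ (since $s_{2}\ge 0$); one checks $\tfrac{n(n-1)}{2}\ge\tfrac{n^{2}}{26}+2n$ for all $n\ge 6$, and for $n=5$ the point set is in general position, so $I=2\binom52=20>\tfrac{25}{26}+10$.

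The only delicate step is $\delta\ge\tfrac1{26}$: with the Crossing Lemma constants it holds with only a small slack, so the tail $\sum_{i\ge c}\frac{i+1}{i^{3}}$ must be bounded with some care (the crude estimate $\sum_{i\ge c}\frac1{i^{2}}<\frac1{c-1}$ does not suffice) and the integer $c$ must be tuned to balance a shrinking tail against the growth of $h$, which hurts through the prefactor $\tfrac1{h+1}$; $c=46$ is essentially the optimal choice, and $\tfrac1{26}$ is then the largest convenient value of $\epsilon$ that survives, which is how the constant $26$ arises.
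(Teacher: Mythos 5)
Your proposal is correct and follows essentially the same route as the paper: the small cases $n\le 51$ are disposed of by the elementary counts $I=n^2-n$ (general position) and $I=2s_2+3s_3\ge\binom n2$, and the case $n\ge 52$ invokes Theorem~\ref{Theorem1} with exactly the paper's parameters $\epsilon=\tfrac1{26}$, $q=2$, $c=46$ and the Crossing Lemma constants. You are in fact more careful than the paper at one point: the paper simply asserts $\delta n^2+rn\ge\tfrac{n^2}{26}+2n$ in its Case~3, whereas you explicitly verify $r>2$ and carry out the tail estimates needed to confirm $\delta\ge\tfrac1{26}$ (which indeed holds only with small slack).
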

\begin{proof}
Case 1. If $ 0 < \dfrac{n}{26} < 1,$ then the arrangement of $P$ is $n^2-n > \dfrac{n^2}{26}+2n$ by $n \geq 5.$\\
Case 2. If $ 1\leq  \dfrac{n}{26} < 2,$ then $I = 2s_2+3s_3 \geq s_2+3s_3 = \dfrac{n^2-n}{2} > \dfrac{n^2}{26}+2n$ by $n \geq 26.$\\
Case 3. If $  \dfrac{n}{26} \geq 2,$ then the assumptions of Theorem \ref{Theorem1} satisfy with $\epsilon = \dfrac{1}{26}, c= 46, q=2.$ We have 
\begin{equation*}
I \geq \delta n^2 + rn \geq \dfrac{n^2}{26}+2n.
\end{equation*}
The proof of Theorem \ref{t11} is completed.
\end{proof}
So we now can give the proof of Main theorem \ref{m1}.
\begin{proof} Let $P$ be a set of $n$ non-collinear points in the plane. If $P$ contains at least $\dfrac{n}{26}+2$ collinear points, then every other point is in at least $\dfrac{n}{26}+2$ lines  $P$ ( one through each of the collinear points). Otherwise, by Theorem \ref{t11}, the arrangement of $P$ has at least $\dfrac{n^2}{26}+2n$ incidences, and so some point is incident with at least $\dfrac{n}{26}+2$ lines determined by $P$. Main theorem \ref{m1} is proved.
\end{proof}
We note that the number $\varepsilon = \dfrac{1}{26}$ is best possible in this technic. Indeed, for our purpose, we need $\delta \ge \varepsilon$ to get a constant $\varepsilon$ in Theorem \ref{Theorem1}. 
Using equivalent transformation, 
\begin{equation*}
\varepsilon \le  \dfrac{1- \dfrac{\beta}{2} \left( \dfrac{-18(c-2)}{c^3(5c-18)} +\sum \limits_{i \ge c}\dfrac{i+1}{i^3}\right)}{h+1+\alpha}= \dfrac{1- \dfrac{\beta}{2} \left( \dfrac{-18(c-2)}{c^3(5c-18)} +\sum \limits_{i \ge c}\dfrac{i+1}{i^3}\right)}{\dfrac{c(c-2)}{5c-18}+1+\alpha}.
\end{equation*}

In order to having maximum value $\varepsilon$ we need to optimal value $c$. We define
\begin{align*}
f(c)&= \dfrac{1- \dfrac{\beta}{2} \left( \dfrac{-18(c-2)}{c^3(5c-18)} +\sum \limits_{i \ge c}\dfrac{i+1}{i^3}\right)}{\dfrac{c(c-2)}{5c-18}+1+\alpha}
\end{align*} , for defined constant $\alpha,\beta$ in Crossing lemma \ref{CL}.
Using Maple application we have that the maximum value of $f(c)$ is at $c=46$. Hence, we can choose $\varepsilon >\dfrac{1}{26}.$ This shows that $\dfrac{1}{26}$ is the best constant.

\section{The lines with few points}
\begin{theorem} \label{Theorem2}	Let $\alpha, \beta$ be positive constants such that every graph $H$ with $n$ vertices and $m \ge \alpha n$ edges satisfies
	\begin{equation*}
	cr(H) \ge \dfrac{m^3}{\beta n^2}.
	\end{equation*} 
	Fix an integer $c \ge 29.$ Then for every set $P$ of $n$ points in the plane with at most $l$ collinear points, the arrangement of $P$ has at least
	\begin{align*}
	 \bigg(1-\dfrac{\beta}{2}\left(\dfrac{c^2-3c-14}{2c^3(c-4)}+\sum\limits_{i\geq c}\dfrac{1}{i^2}\right)\bigg)\dfrac{2c-8}{c^2+3c-18}n^2-\dfrac{(2c-8)\alpha}{c^2+3c-18}ln. 
	\end{align*}
		lines with at most $c$ points.
\end{theorem}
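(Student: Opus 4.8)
The plan is to rerun the counting argument behind Theorem \ref{Theorem1}, but this time tracking the \emph{number of lines with at most $c$ points}, $t:=\sum_{i=2}^{c}s_{i}$, instead of the number of incidences. First I would make the harmless reduction $l\le n-3$: if $l\ge n-2$ the quantity in the statement is non-positive (as $\alpha>1$, so once $l$ is that large the $ln$–term swamps the $n^{2}$–term), and there is nothing to prove; when $l\le n-3$ Hirzebruch's Inequality (Theorem \ref{H}) is available. As in the proof of Theorem \ref{Theorem1}, consider the visibility graph and its subgraphs $G_{i}$, let $k$ be the least integer with $|E(G_{k})|\le\alpha n$, and call an $i$-line \emph{small} if $i\le c$, \emph{large} if $i\ge k$, and \emph{medium} otherwise.

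Combining $\binom{n}{2}=\sum_{i\ge2}\binom{i}{2}s_{i}$ with $g$ times Hirzebruch's Inequality, where
\[
g:=\frac{(c+1)(c-2)}{4(c-4)},\qquad W:=1+g=\frac{c^{2}+3c-18}{4(c-4)},
\]
the coefficient of $s_{i}$ for $5\le i\le c$ becomes $\binom{i}{2}-g(2i-9)$; this is convex in $i$, hence maximised at $i=c$, where it equals $1+g=W$ by the very choice of $g$, and the hypothesis $c\ge29$ is exactly what makes $W$ also dominate the coefficients $1+g$, $3+\tfrac34 g$, $6$ of $s_{2},s_{3},s_{4}$ (equivalently, $g\ge8$). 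This yields
\[
\sum_{i=2}^{c}\binom{i}{2}s_{i}\ \le\ Wt\ -\ gn\ -\ g\sum_{i>c}(2i-9)s_{i},
\]
and I note at once that $\tfrac1{2W}=\tfrac{2c-8}{c^{2}+3c-18}$ and $\tfrac{\alpha}{2W}=\tfrac{(2c-8)\alpha}{c^{2}+3c-18}$ are precisely the two coefficients in the statement.

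The contribution of the lines with more than $c$ points then splits into its large and medium parts. For large $i$ one has $[\binom{i}{2}-g(2i-9)]s_{i}\le\binom{i}{2}s_{i}\le\tfrac{l}{2}(i-1)s_{i}$ since no line has more than $l$ points, so $\sum_{i\ge k}[\binom{i}{2}-g(2i-9)]s_{i}\le\tfrac{l}{2}|E(G_{k})|\le\tfrac{\alpha ln}{2}$, which after dividing by $W$ produces exactly the $-\tfrac{(2c-8)\alpha}{c^{2}+3c-18}ln$ of the statement. For medium $i$ (the case $k\le c$, where there are no medium lines, being trivial) minimality of $k$ forces $|E(G_{i})|>\alpha n$, so Theorem \ref{ST} gives $\sum_{j\ge i}(j-1)s_{j}\le\tfrac{\beta n^{2}}{2(i-1)^{2}}$ and $\sum_{j\ge i}s_{j}\le\tfrac{\beta n^{2}}{2(i-1)^{3}}$; using an estimate of the shape $2i-9\ge i-1$ (valid for $i\ge8$) to absorb the inherited correction and then telescoping $\binom{i}{2}-g(2i-9)$ along $i$ as the quantity $T$ was treated in the proof of Theorem \ref{Theorem1}, the medium block is bounded by $\tfrac{\beta n^{2}}{4}\big(\tfrac{c^{2}-3c-14}{2c^{3}(c-4)}+\sum_{i\ge c}\tfrac1{i^{2}}\big)$. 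Collecting the three blocks into $\binom{n}{2}\le Wt-gn+\tfrac{\alpha ln}{2}+\tfrac{\beta n^{2}}{4}\big(\cdots\big)$ and solving for $t$ gives the asserted lower bound.

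The one step I expect to require real care is the medium block: one must pick the right Szemer\'{e}di--Trotter estimate (part (b) of Theorem \ref{ST}), carry out the Abel summation so that the tail collapses to the convergent series $\sum_{i\ge c}i^{-2}$, and then dominate a residual tail (of order $\sum_{i\ge c}i^{-3}$) by the explicit rational $\tfrac{c^{2}-3c-14}{2c^{3}(c-4)}$, all the while keeping the $-g(2i-9)$ Hirzebruch term in play. I expect this bookkeeping, rather than any conceptual difficulty, to be the main obstacle; the remaining points — that $c\ge29$ is the exact threshold making the small block work, the degenerate case $k\le c$, and the trivial range $l\ge n-2$ — are routine.
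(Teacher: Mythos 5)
Your plan is essentially the paper's own proof of Theorem \ref{Theorem2}: your $g$ and $W$ are exactly the paper's $h=\frac{c^2-c-2}{4c-16}$ and $X=h+1$, the verification that $\binom{c}{2}-g(2c-9)=W$ and that $c\ge 29$ is precisely the threshold $g\ge 8$ matches the paper's choice of constants, and your small and large blocks (the latter via $\sum_{i\ge k}\binom{i}{2}s_i\le\frac{l}{2}|E(G_k)|\le\frac{\alpha ln}{2}$) coincide with the paper's argument.

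The only substantive divergence is in the medium block, and there your sketched execution would not recover the stated constant. The paper uses only part (a) of Theorem \ref{ST}, through the identity
\begin{equation*}
\binom{i}{2}-h(2i-9)=\frac{1}{2}\Big[\Big(c-4h+\frac{14h}{i-1}\Big)+(i-c)\Big](i-1),
\end{equation*}
bounding $\frac{14h}{i-1}\le\frac{14h}{c}$ so that $c-4h+\frac{14h}{c}=\frac{c^2-3c-14}{2c(c-4)}$ and a single application of part (a) to $\sum_{i\ge c+1}(i-1)s_i\le\frac{\beta n^2}{2c^2}$ yields exactly $\frac{\beta n^2}{4}\cdot\frac{c^2-3c-14}{2c^3(c-4)}$, while the telescoped double sum $\frac12\sum_i\sum_{j\ge i}(j-1)s_j$ gives $\frac{\beta n^2}{4}\sum_{i\ge c}i^{-2}$. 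Your proposed route --- part (b) of Theorem \ref{ST} plus a residual of the shape $7g\sum_{i>c}s_i\le 7g\cdot\frac{\beta n^2}{2c^3}$ --- contributes about $\frac{7(c^2-c-2)}{8c^3(c-4)}\beta n^2$, which is substantially larger than the required $\frac{c^2-3c-14}{8c^3(c-4)}\beta n^2$, so carried out literally it proves the theorem only with a worse rational term. (Part (b) is needed in Theorem \ref{Theorem1} because the medium sums there are $\sum js_j$; here they are $\sum(j-1)s_j$ and part (a) suffices.) Since you flagged this as the delicate step and your pointer to the treatment of $T$ in Theorem \ref{Theorem1} is the right one, this is a bookkeeping correction rather than a missing idea; the rest of the plan, including the reduction to $l\le n-3$ so that Hirzebruch's inequality applies (a point the paper itself glosses over), is sound.
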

\begin{proof}
	We define the small, medium and large pairs and lines respectively as in the proof of Theorem \ref{Theorem1}. 	Set $h=\dfrac{c^2-c-2}{4c-16}$, where $c\ge 29$. Thus, $h>0$. Using the Inequality of Hirzebruch (Theorem \ref{H}), we have
	\begin{equation*}
	s_2+\dfrac{3h}{4}s_3 - hn - h\sum\limits_{i\geq 5}(2i-9)s_i \ge 0.
	\end{equation*}
	Now we have,
	\begin{eqnarray*}
		P_S &=& \sum\limits_{2}^{c}\left(\begin{array}{*{20}{c}}
			i\\2
		\end{array} \right)s_i \\  
		&=& s_2+3s_3+6s_4+\sum\limits_{i=5}^{c} \left(\begin{array}{*{20}{c}}
			i\\2
		\end{array} \right)
		s_i \\
		&\le& (h+1)s_2+\left(\dfrac{3h}{4}+3\right)s_3+6s_4+\sum\limits_{i=5}^{c}\left(\begin{array}{*{20}{c}}
			i\\2
		\end{array} \right)
		s_i-hn-h\sum\limits_{i\geq 5}(2i-9)s_i \\
		&\le& (h+1)s_2+\dfrac{3}{4}(h+4)s_3+6s_4+\sum\limits_{i=5}^{c}\left(\dfrac{i(i-1)}{2}-h(2i-9)\right)s_i-hn-h\sum\limits_{i\geq c+1}(2i-9)s_i. 
	\end{eqnarray*}
	By $c \geq 29,$ it is easy to see that
	$$X:=h+1=max\{h+1; \dfrac{3}{4}(h+4);6; max_{5 \le i \le c}(\dfrac{i(i-1)}{2}-h(2i-9))\},$$
	and thus we get 
	\begin{equation*}
	P_S \le XL_S -hn-h\sum\limits_{i\geq c+1}(2i-9)s_i.
	\end{equation*}
	For the medium index $i,$ we use the Crossing Lemma \ref{CL} and part (a) of Theorem \ref{ST} to imply that 
	\begin{equation*}
	\sum\limits_{j \ge i}(j-1)s_j \le \dfrac{\beta n^2}{2(i-1)^2},
	\end{equation*}
	thus we have 
	\begin{eqnarray*}
		&P_S&+P_M-XL_S \leq -hn-h\sum\limits_{i\geq c+1}(2i-9)s_i+ \sum\limits_{i=c+1}^{k-1}\left(\begin{array}{*{20}{c}}
			i\\2
		\end{array}\right)s_i\ \\
		&=&-hn-h\sum\limits_{i\geq k}(2i-9)s_i+ \sum\limits_{i=c+1}^{k-1}\bigg(\dfrac{i(i-1)}{2}-h(2i-9)\bigg)s_i\ \\
		&=&-hn-h\sum\limits_{i\geq k}(2i-9)s_i+\dfrac{1}{2}\left(\sum\limits_{i=c+1}^{k-1}(c-\dfrac{4hi-18h}{i-1})(i-1)s_i +\sum\limits_{i=c+1}^{k-1} \sum\limits_{j=i}^{k-1}(j-1)s_j\right)\\
		&=&-hn-h\sum\limits_{i\geq k}(2i-9)s_i+\dfrac{1}{2}\left( \sum\limits_{i=c+1}^{k-1}(c-4h +\dfrac{14h}{i-1})(i-1)s_i +\sum\limits_{i=c+1}^{k-1} \sum\limits_{j=i}^{k-1}(j-1)s_j\right)\\
		&\leq&-hn-h\sum\limits_{i\geq k}(2i-9)s_i+\dfrac{1}{2}\left( \sum\limits_{i=c+1}^{k-1}(c-4h+\dfrac{14h}{c})(i-1)s_i +\sum\limits_{i=c+1}^{k-1} \sum\limits_{j=i}^{k-1}(j-1)s_j\right)\\
		&=&-hn-h\sum\limits_{i\geq k}(2i-9)s_i+\dfrac{1}{2}\left((c-4h+\dfrac{14h}{c}) \sum\limits_{i=c+1}^{k-1}(i-1)s_i +\sum\limits_{i=c+1}^{k-1} \sum\limits_{j=i}^{k-1}(j-1)s_j\right)
		\end{eqnarray*}
	On the other hand, we have 
	\begin{align*}
	c-4h+\dfrac{14h}{c}&=c-4\dfrac{c^2-c-2}{4c-16}+\dfrac{7c^2-7c-14}{c(2c-8)}=\dfrac{c^2-3c-14}{2c(c-4)}\\
	&\Rightarrow c-4h+\dfrac{14h}{c} > 0 \  (\  \text{by } c \ge 29).
	\end{align*}
	So we get 
		
\begin{align*}
		P_S+P_M-XL_S&\le-hn-h\sum\limits_{i\geq k}(2i-9)s_i+\dfrac{1}{2}\left(\dfrac{c^2-3c-14}{2c(c-4)} \sum\limits_{i\geq c+1}(i-1)s_i +\sum\limits_{j=c+1}^{k-1} \sum\limits_{i\geq  j}(i-1)s_i\right)\\ &\leq-hn-h\sum\limits_{i\geq k}(2i-9)s_i+\left(\dfrac{c^2-3c-14}{2c^3(c-4)}+\sum\limits_{i\geq c}\dfrac{1}{i^2}\right)\dfrac{\beta n^2}{4}.
	\end{align*}
	Thus, we now have 
	\begin{align*}
	\left(\begin{array}{*{20}{c}}
	n\\2
	\end{array} \right) - XL_S&=P_S+P_M+P_L-XL_S \\
	& \leq -hn-h\sum\limits_{i\geq k}(2i-9)s_i+\left(\dfrac{c^2-3c-14}{2c^3(c-4)}+\sum\limits_{i\geq c}\dfrac{1}{i^2}\right)\dfrac{\beta n^2}{4}+ \sum\limits_{i=k}^{l}\left(\begin{array}{*{20}{c}}
	i\\2
	\end{array} \right)s_i\\
	&= -hn+\left(\dfrac{c^2-3c-14}{2c^3(c-4)}+\sum\limits_{i\geq c}\dfrac{1}{i^2}\right)\dfrac{\beta n^2}{4}+ \sum\limits_{i\geq k}\left(\dfrac{i(i-1)}{2} -2hi+9h\right)s_i\\
	&= -hn+\left(\dfrac{c^2-3c-14}{2c^3(c-4)}+\sum\limits_{i\geq c}\dfrac{1}{i^2}\right)\dfrac{\beta n^2}{4}+ \sum\limits_{i\geq k}\left(\dfrac{i}{2} -2h+\dfrac{7h}{i-1}\right)(i-1)s_i\\
	&\leq -hn+\left(\dfrac{c^2-3c-14}{2c^3(c-4)}+\sum\limits_{i\geq c}\dfrac{1}{i^2}\right)\dfrac{\beta n^2}{4}+ \dfrac{l}{2}|E(G_k)|\\
	&\le -hn+\left(\dfrac{c^2-3c-14}{2c^3(c-4)}+\sum\limits_{i\geq c}\dfrac{1}{i^2}\right)\dfrac{\beta n^2}{4}+ \dfrac{l}{2}\alpha n.
	\end{align*}
	So we get
	\begin{align*}
	L_S&\ge \bigg(\dfrac{1}{2}-\dfrac{\beta}{4}\left(\dfrac{c^2-3c-14}{2c^3(c-4)}+\sum\limits_{i\geq c}\dfrac{1}{i^2}\right)\bigg)\dfrac{n^2}{X}+\bigg(h-\dfrac{1}{2}-\dfrac{l\alpha}{2}\bigg)\dfrac{n}{X}\\
	&\ge \bigg(\dfrac{1}{2}-\dfrac{\beta}{4}\left(\dfrac{c^2-3c-14}{2c^3(c-4)}+\sum\limits_{i\geq c}\dfrac{1}{i^2}\right)\bigg)\dfrac{n^2}{X}-\dfrac{l\alpha n}{2X}.
	\end{align*}
	On the other hand, $X =h+1=\dfrac{c^2+3c-18}{4c-16},$ we thus get
	\begin{align*}
	L_S\ge \bigg(1-\dfrac{\beta}{2}\left(\dfrac{c^2-3c-14}{2c^3(c-4)}+\sum\limits_{i\geq c}\dfrac{1}{i^2}\right)\bigg)\dfrac{2c-8}{c^2+3c-18}n^2-\dfrac{(2c-8)\alpha}{c^2+3c-18}ln. 
	\end{align*}
	Theorem \ref{Theorem2} is proved.
\end{proof}
For the case $c=36,$ we get the following.
\begin{corollary}\label{t21}
	Every set $P$ of $n$ points with at most $l$ collinear determines at least $\dfrac{1}{39}n^2- \dfrac{1}{3}ln$ lines with at most 36 points.
\end{corollary}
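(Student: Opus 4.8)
The plan is to obtain Corollary \ref{t21} simply by specializing Theorem \ref{Theorem2} to $c=36$, with the constants $\alpha,\beta$ supplied by the Crossing Lemma. Since $36\ge 29$, Theorem \ref{Theorem2} applies directly, and its conclusion then concerns lines with at most $36$ points, as required. By Lemma \ref{CL} every graph with $n$ vertices and $m\ge\frac{103}{16}n$ edges satisfies $cr(G)\ge\frac{1024m^{3}}{31827n^{2}}=\frac{m^{3}}{(31827/1024)n^{2}}$, so we may take $\alpha=\dfrac{103}{16}$ and $\beta=\dfrac{31827}{1024}$.

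First I would evaluate the rational factors appearing in Theorem \ref{Theorem2} at $c=36$. One finds $2c-8=64$, $c^{2}+3c-18=1386$, $c^{2}-3c-14=1174$, and $2c^{3}(c-4)=2985984$, so that
\[
\frac{2c-8}{c^{2}+3c-18}=\frac{32}{693},\qquad
\frac{c^{2}-3c-14}{2c^{3}(c-4)}=\frac{1174}{2985984},\qquad
\frac{(2c-8)\alpha}{c^{2}+3c-18}=\frac{412}{1386}=\frac{206}{693}.
\]
The only ingredient that is not a finite rational is the tail series $\sum_{i\ge 36}\frac{1}{i^{2}}$. For this I would use the elementary telescoping bound $\frac{1}{i^{2}}<\frac{1}{i-1/2}-\frac{1}{i+1/2}$ (immediate from $i^{2}-\frac14<i^{2}$), which gives $\sum_{i\ge 36}\frac{1}{i^{2}}<\frac{1}{35.5}=\frac{2}{71}$.

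Then I would put the pieces together. Using $\frac{\beta}{2}=\frac{31827}{2048}$ and the bound on the tail series,
\[
\Big(1-\frac{\beta}{2}\Big(\frac{1174}{2985984}+\sum_{i\ge 36}\frac{1}{i^{2}}\Big)\Big)\frac{32}{693}
>\Big(1-\frac{31827}{2048}\Big(\frac{1174}{2985984}+\frac{2}{71}\Big)\Big)\frac{32}{693},
\]
and a direct arithmetic check shows that the last expression exceeds $\frac{1}{39}$. Since also $\frac{206}{693}<\frac13$, the $ln$-term in Theorem \ref{Theorem2} satisfies $-\frac{(2c-8)\alpha}{c^{2}+3c-18}\,ln\ge-\frac13 ln$. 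Hence the lower bound of Theorem \ref{Theorem2} at $c=36$ is at least $\frac{1}{39}n^{2}-\frac13 ln$, which is exactly the statement of the corollary.

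I expect the main (indeed the only) delicate point to be the numerical verification of the $n^{2}$-coefficient: the difference between its true value and $\frac{1}{39}$ is only about $4\times 10^{-5}$, so a coarse estimate such as $\sum_{i\ge 36}1/i^{2}<\frac1{35}$ does not suffice and one really must use the sharper bound $\sum_{i\ge36}1/i^{2}<\frac1{35.5}$ (or compute a few initial terms of the series explicitly). Everything else is routine substitution into Theorem \ref{Theorem2}.
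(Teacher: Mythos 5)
Your proposal is correct and is exactly the argument the paper intends: the paper gives no written proof of Corollary \ref{t21} beyond the remark ``for the case $c=36$,'' and you have simply carried out that specialization of Theorem \ref{Theorem2} with the Crossing Lemma constants $\alpha=\frac{103}{16}$, $\beta=\frac{31827}{1024}$, including the one genuinely delicate step (bounding $\sum_{i\ge 36}i^{-2}$ by $\frac{1}{35.5}$ so that the $n^2$-coefficient clears $\frac{1}{39}$). The arithmetic checks out, so this is a faithful, fully detailed version of the paper's own (implicit) proof.
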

We now apply Theorem \ref{Theorem2} to give the proof of Main theorem \ref{m2}.
\begin{proof}
	We may assume that $l$ is the size of the longest line. For some integer $c \geq 29$, then by Theorem \ref{Theorem2} we have $L \ge L_S \ge A(c)n^2-B(c)nl$ for some $A(c)$ and $B(c)$ evident in the theorem. Observe that,
	\begin{eqnarray*}
		A(c)&=&\bigg(1-\dfrac{\beta}{2}\left(\dfrac{c^2-3c-14}{2c^3(c-4)}+\sum\limits_{i\geq c}\dfrac{1}{i^2}\right)\bigg)\dfrac{2c-8}{c^2+3c-18} \\
		B(c)&=&\dfrac{(2c-8)\alpha}{c^2+3c-18}.
	\end{eqnarray*}
		We note that, 
	\begin{eqnarray*}
		\dfrac{2A}{1+2B} &\ge& \epsilon \\
		\Rightarrow A &\ge& \dfrac{\epsilon}{2}+ B \epsilon -\dfrac{\epsilon^2}{2}\\
		\Rightarrow An &\ge& \dfrac{\epsilon n}{2} +(B-\dfrac{\epsilon}{2}) \epsilon n\\
		\Rightarrow An &\ge& \dfrac{\epsilon n}{2} + (B-\dfrac{\epsilon}{2})l\\
		\Rightarrow An^2- Bnl &\ge& \dfrac{\epsilon n (n-l)}{2}.
	\end{eqnarray*}
	So we can find the maximum of  $\dfrac{2A(c)}{1+2B(c)}$ to get a largest number $\epsilon.$ Now, set $c=44$ we get $\epsilon \le \dfrac{1}{30.2}.$ So we choose $\epsilon = \dfrac{1}{30.5}$ to complete Main theorem \ref{m2}.
\end{proof}
We now get Main theorem \ref{m3} by  using Main theorem \ref{m2} and the following observation:
\begin{theorem}(\cite{PW}) \label{T5}
	Let $P$ be a set of $n$ non-collinear points in a plane. Then at least half the lines determined by
	$P$ contain at most 3 points.
\end{theorem}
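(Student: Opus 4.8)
The plan is to prove the equivalent numerical statement that the lines with at most three points are at least as numerous as the lines with at least four points. Writing $s_i$ for the number of $i$-lines as in the paper, the total number of lines determined by $P$ is $\sum_{i\ge 2}s_i$, and those with at most three points number $s_2+s_3$. Since
\[
s_2+s_3 \ge \tfrac12\Big(s_2+s_3+\sum_{i\ge 4}s_i\Big) \iff s_2+s_3\ge \sum_{i\ge 4}s_i,
\]
it suffices to establish $s_2+s_3\ge \sum_{i\ge 4}s_i$. In fact I would prove the stronger bound $s_2\ge \sum_{i\ge 4}s_i$, using the ordinary lines alone.

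The key tool is the classical Melchior inequality, a combinatorial complement to the Hirzebruch inequality already invoked in the paper: for any non-collinear set $P$ of $n$ points,
\[
s_2 \ge 3+\sum_{i\ge 4}(i-3)s_i.
\]
I would state this as a cited lemma, though the derivation is short and self-contained: apply Euler's formula $V-E+F=1$ to the dual arrangement of $n$ lines in $\mathbb{RP}^2$, which is non-concurrent exactly because $P$ is non-collinear, together with the face bound $2E\ge 3F$ (every face has at least three sides); expressing $V,E,F$ through the incidence numbers $s_i$ yields $\sum_{i}(3-i)s_i\ge 3$, which rearranges to the displayed inequality.

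With Melchior's inequality in hand the deduction is immediate: for every $i\ge 4$ one has $i-3\ge 1$, so
\[
s_2 \ge 3+\sum_{i\ge 4}(i-3)s_i \ge \sum_{i\ge 4}s_i .
\]
Hence $s_2+s_3\ge s_2\ge \sum_{i\ge 4}s_i$, which is the required inequality, and therefore at least half of the lines determined by $P$ contain at most three points.

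I expect the only genuine obstacle to be the justification of Melchior's inequality; the passage from it to the theorem is a single application of $i-3\ge 1$. As a sanity check, the near-pencil configuration of $n-1$ collinear points together with one point off the line gives $s_2=n-1$ and $s_{n-1}=1$, so Melchior's inequality holds with equality and the bound $s_2\ge\sum_{i\ge 4}s_i$ is tight, confirming that no slack is wasted. If one insisted on staying strictly within the paper's stated toolkit, the Hirzebruch inequality controls $s_2$ and $s_3$ in terms of the lines with $i\ge 5$ points but leaves the $s_4$ term entirely unbounded; some Euler/duality input of Melchior type therefore seems genuinely necessary in order to dominate the $4$-lines.
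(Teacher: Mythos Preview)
Your argument is correct. Melchior's inequality $s_2 \ge 3 + \sum_{i\ge 4}(i-3)s_i$ immediately gives $s_2 \ge \sum_{i\ge 4} s_i$, and hence $s_2+s_3 \ge \sum_{i\ge 4} s_i$, which is exactly the claim. The derivation of Melchior's inequality you sketch (Euler's formula in $\mathbb{RP}^2$ together with $2E\ge 3F$) is the standard one and goes through without difficulty once the lines are not all concurrent, which is guaranteed by the non-collinearity hypothesis on $P$.

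As for comparison: the present paper does not supply a proof of this theorem at all; it is simply quoted from \cite{PW} and used as a black box to pass from Main Theorem~\ref{m2} to Main Theorem~\ref{m3}. Your proof is in fact the argument given in \cite{PW}, so you have reconstructed the intended proof. Your closing remark is also apt: Hirzebruch's inequality alone cannot yield the statement because it imposes no constraint whatsoever on $s_4$, so some Euler/Melchior input is genuinely required.
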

%{\bf Acknowledgements.} 

\end{document}